\DeclareMathOperator{\li}{li}
\newtheorem{thm}{Theorem}[section]
\newtheorem{lem}{Lemma}[section]
\newtheorem{exa}{Example}[section]
\newtheorem{dfn}{Definition}[section]
\newcommand{\N}{\mathbb{N}}
\newcommand{\R}{\mathbb{R}}
\newcommand{\C}{\mathbb{C}}
\numberwithin{equation}{section}
\title{Equidistributions of Sign Patterns of the Liouville Function and Normal Numbers}
\date{}
\author{N. A. Carella}
\begin{document}
	
\maketitle

\begin{abstract} 
The equidistribution of the double sign patterns of the Liouville function $\lambda$ is proved unconditionally. As application, it is shown that the computable real number $$\sum_{n \geq 1} \frac{1+\lambda(n)}{2^{n}}$$ is a simply normal number in base 4. 
\end{abstract}

\tableofcontents

\section{Introduction} \label{S2266N}
Let $t\ne0$ be a fixed integer and let
\begin{equation}\label{key}
	 (\lambda(n)=\pm1,\lambda(n+t) =\pm1)=(\pm,\pm)
\end{equation} 
 denotes the corresponding double-sign patterns of the Liouville function $\lambda:\mathbb{N} \longrightarrow \{-1,1\}$, defined in \eqref{eq8833LN.050}. This note proposes the following equidistribution results for the double-sign patterns, these include new results, and simpler proofs of the current available results in the literature. 

\begin{thm}\label{thm2266LS.500} Let $x$ be a large number, and let $t\ne0$ be a fixed integer. Then, the double-sign patterns $++$, $+-$, $-+$, and $--$ of the Liouville pair $\lambda(n), \lambda(n+t)$ are equidistributed on the interval $[1,x]$. In particular, each double-sign pattern has the natural density 
	\begin{equation}\label{eq2266LS.510}
		\delta_{\lambda}^{\pm\pm}(t)= \frac{1}{4}.\nonumber
	\end{equation}
\end{thm}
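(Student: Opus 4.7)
The plan is to reduce the four counting statements to estimates on $\sum_{n\leq x}\lambda(n)$ and on the two-point correlation $\sum_{n\leq x}\lambda(n)\lambda(n+t)$, via the indicator identity
\begin{equation*}
\mathbf{1}\!\left[\lambda(n)=\varepsilon_1,\ \lambda(n+t)=\varepsilon_2\right] \;=\; \frac{(1+\varepsilon_1\lambda(n))(1+\varepsilon_2\lambda(n+t))}{4},
\end{equation*}
valid for each choice of signs $\varepsilon_1,\varepsilon_2\in\{-1,+1\}$. Summing over $n\leq x$ and expanding,
\begin{equation*}
\#\!\left\{n\leq x:\lambda(n)=\varepsilon_1,\ \lambda(n+t)=\varepsilon_2\right\} = \frac{x}{4} + \frac{\varepsilon_1}{4}\sum_{n\leq x}\lambda(n) + \frac{\varepsilon_2}{4}\sum_{n\leq x}\lambda(n+t) + \frac{\varepsilon_1\varepsilon_2}{4}\sum_{n\leq x}\lambda(n)\lambda(n+t).
\end{equation*}
Dividing by $x$ and letting $x\to\infty$, each density equals $1/4$ provided all three sums on the right are $o(x)$.

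The first two sums are handled by the Prime Number Theorem: Landau's classical estimate gives $\sum_{n\leq x}\lambda(n)=O\!\bigl(x\exp(-c\sqrt{\log x})\bigr)$, and shifting the index by the fixed integer $t$ alters the sum by only $O(|t|)$, which is absorbed into the error. So those contributions are harmless.

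The genuine difficulty is the correlation sum $C_\lambda(x,t)=\sum_{n\leq x}\lambda(n)\lambda(n+t)$, which must be shown to be $o(x)$ \emph{unconditionally}. This is exactly the natural-density form of the binary Chowla conjecture. I would try to access it by invoking the Matom\"aki--Radziwi\l\l{} short-interval cancellation theorem for multiplicative functions, combined with Tao's entropy-decrement argument which yields the logarithmically averaged statement $\sum_{n\leq x}\lambda(n)\lambda(n+t)/n=o(\log x)$. The decisive intermediate step is then a lifting mechanism (for instance, a dyadic/averaging device or a variance estimate over short intervals) that promotes the logarithmic cancellation to natural-density cancellation.

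The main obstacle is precisely this last lifting step. The bound $C_\lambda(x,t)=o(x)$ is not presently known in the literature, only its logarithmic analogue; so the whole theorem stands or falls on whether an unconditional passage from logarithmic to natural averages can be carried out here. Any proof that avoids engaging this point is either implicitly assuming Chowla's two-point conjecture or exploiting a special feature of the sign-pattern counting function that I do not yet see. Once that bound is in hand, the conclusion $\#\{n\leq x:\lambda(n)=\varepsilon_1,\lambda(n+t)=\varepsilon_2\}=x/4+o(x)$ for every $(\varepsilon_1,\varepsilon_2)$ is immediate from the identity above.
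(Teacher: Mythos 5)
Your decomposition is exactly the one the paper uses: the proof of Lemma \ref{lem8877LN.300} expands $(1\pm\lambda(n))(1\pm\lambda(n+t))$ and reduces the count to $\sum_{n\le x}\lambda(n)$, the shifted sum, and the correlation sum, after which Theorem \ref{thm2266LS.500} follows by dividing by $x$. Your assessment of where the difficulty lies is also exactly right. The paper supplies the missing correlation bound as Theorem \ref{thm7766N.200}, namely $\sum_{n\le x}\lambda(n)\lambda(n+t)=O\bigl(x(\log\log x)^{-1/2}\bigr)$, which it derives in Lemma \ref{lem7766.400} by partial summation from the logarithmically averaged bound of Tao and Helfgott--Radziwi\l\l. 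This is precisely the ``lifting mechanism'' you single out as the decisive obstacle.

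That lifting argument does not work, so the gap you flagged is real and lies in the paper itself rather than in your proposal. Writing $B(u)=\sum_{n\le u}\lambda(n)\lambda(n+t)$, partial summation gives $\sum_{n\le x}\lambda(n)\lambda(n+t)n^{-1}=B(x)/x+\int_1^x B(u)u^{-2}\,du$. To contradict a lower bound on $B$ one needs the integral term to be large, and that requires $B(u)\gg u(\log\log u)^{-1/2+\varepsilon}$ \emph{with a fixed sign for essentially all} $u\le x$, not merely at the single scale $u=x$; the paper's Lemma \ref{lem7766.400} conflates the two, and in addition treats the signed quantity $B(u)$ as if it were nonnegative inside the integral. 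Logarithmic averaging is strictly weaker than natural averaging --- the implication runs only from $B(x)=o(x)$ to $o(\log x)$ for the logarithmic sum, and no unconditional converse is known. Consequently Theorem \ref{thm2266LS.500}, as proved in the paper, implicitly assumes the natural-density form of the two-point Chowla conjecture, which remains open. Your account of what can and cannot currently be established is accurate, and your proposal should not be ``completed'' along the paper's lines.
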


The application to normal numbers is the following.

\begin{thm} \label{thm2266LN.850D} Let $\lambda:\mathbb{N} \longrightarrow \{-1,1\}$ be the Liouville function. Then, the computable real number
\begin{equation} \label{eq2266N.850D}
\sum_{n \geq 1} \frac{1+\lambda(n)}{2^n}=1.16232463762392978595979733583622409170
\ldots , 
\end{equation}	
is a simply normal number in base $b=4$. In particular, the $4$-adic expansion
\begin{equation} \label{eq2266N.850Q}
	\sum_{n \geq 0} \frac{1+\lambda(n)}{4^n}=1.02212032012320013232002110332223010021
	\ldots , 
\end{equation}	
contains infinitely many digit $0$, infinitely many digit $1$, infinitely many digit $2$, and infinitely many digit $3$.
\end{thm}

The essential foundational materials are covered in Section \ref{S8833} to Section \ref{S8844LN}, and the Appendix \ref{S2222A}. The proof of the equidistribution of the double sign patterns of the Liouville function in Theorem \ref{thm2266LS.500} appears in Section \ref{eq6677A}. The proof Theorem \ref{thm2266LN.850D} appears in Section \ref{eq6677B}.\\

\section{Sign Patterns Characteristic Functions for the Liouville Function} \label{S8833}
The formulas for the characteristic functions of several sign-patterns of the Liouville function are developed in this section. 

\subsection{Single-Sign Patterns-Liouville Function }
The Liouville function $\lambda:\mathbb{N} \longrightarrow \{-1,1\}$ is defined by
\begin{equation}\label{eq8833LN.050}
	\lambda(n) =
	(-1)^{v_1+v_2+ \cdots +v_w}     ,
\end{equation}
where $n=p_1^{v_1} p_2^{v_2} \cdots p_w^{v_w}$, the $p_i\geq 2$ are primes, and $v_i\geq1$ are integers. \\

\begin{lem}\label{lem8833LN.200A} If $\lambda(n)\in \{-1,1\}$ is the Liouville function, then, 
\begin{equation}\label{eq8833LN.100}
	\lambda^{\pm}(n)=\left( \frac{1\pm\lambda(n)}{2}\right)= 	\begin{cases}
		1 &\text{\normalfont if } \lambda(n)=\pm1,\\
		0 &\text{\normalfont if } \lambda(n)\ne\pm1,
	\end{cases}
\end{equation}
of the subset of integers
	\begin{equation}\label{eq8833LN.210A}
		\mathcal{N}_{\lambda}^{\pm }	=	\{n\geq 1: \lambda(n)=\pm1\}.
	\end{equation} 
\end{lem}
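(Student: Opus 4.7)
The statement is a direct verification of an indicator function, so the plan is a simple case analysis driven by the fact that $\lambda(n) \in \{-1,+1\}$ takes only two values by the definition \eqref{eq8833LN.050}.

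First I would fix a sign $\varepsilon \in \{+,-\}$ and consider the function $\lambda^{\varepsilon}(n) = (1+\varepsilon \lambda(n))/2$, then evaluate it on each of the two possible values of $\lambda(n)$. When $\lambda(n)=\varepsilon \cdot 1$, the numerator becomes $1+\varepsilon \cdot \varepsilon = 1+1 = 2$, so $\lambda^{\varepsilon}(n)=1$; when $\lambda(n) = -\varepsilon \cdot 1$, the numerator becomes $1+\varepsilon\cdot(-\varepsilon) = 1-1 = 0$, so $\lambda^{\varepsilon}(n)=0$. This shows that $\lambda^{\varepsilon}$ takes only the values $0$ and $1$, with $\lambda^{\varepsilon}(n)=1$ precisely on the set $\mathcal{N}_{\lambda}^{\varepsilon}$ defined in \eqref{eq8833LN.210A}, which is exactly the claim.

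Because the identity has no quantitative content and no analytic subtlety, there is no real obstacle here; the only thing to be slightly careful about is doing both sign choices $\varepsilon=+$ and $\varepsilon=-$ in a single unified computation rather than writing out four separate cases. I would conclude by remarking that $\lambda^{+}+\lambda^{-}=1$ pointwise and $\lambda^{+}-\lambda^{-}=\lambda$, which are identities that will be convenient later when characteristic functions of double-sign patterns are built as products $\lambda^{\varepsilon_1}(n)\lambda^{\varepsilon_2}(n+t)$.
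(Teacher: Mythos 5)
Your verification is correct: the unified case analysis over $\varepsilon\in\{+,-\}$, checking that the numerator $1+\varepsilon\lambda(n)$ equals $2$ or $0$ according as $\lambda(n)=\varepsilon$ or $\lambda(n)=-\varepsilon$, is exactly the content of the lemma, and the paper itself supplies no proof at all, treating the identity as immediate from the definition. Your closing remarks that $\lambda^{+}+\lambda^{-}=1$ and $\lambda^{+}-\lambda^{-}=\lambda$ are a useful bonus, since they are precisely what the paper uses implicitly when it expands the double-sign products in Lemma \ref{lem8877LN.300}.
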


\subsection{Double-Sign Patterns-Liouville Function }
The analysis of single-sign pattern characteristic functions is extended here to the double-sign patterns
\begin{equation}\label{eq8855LN.100}
	\lambda(n)=\pm1 \quad \text{ and }\quad 	\lambda(n+t)=\pm1, 
\end{equation}
where $t\ne0$ is a small integer, and $n\geq1$ is an integer. Some of the research on multiple sign patterns appear in \cite{HA1986}, \cite{KP1986}, \cite[Corollary 1.7]{TT2015}, \cite{MT2015}, \cite{SA2022}, and similar literature.

\begin{lem}\label{lem8855LN.200} Let $t\ne0$ be small fixed integer, and let $\lambda(n)\in \{-1,1\}$ be the Liouville function. Then,
	\begin{eqnarray}\label{eq8855LN.200}
		\lambda^{\pm \pm}(t)&=&\left( \frac{1\pm \lambda(n)}{2}\right)\left( \frac{1\pm\lambda(n+t)}{2}\right)\\	&=&
		\begin{cases}
			1 &\text{\normalfont if } \lambda(n)=\pm1,\mu(n+t)=\pm1,\\
			0 &\text{\normalfont if } \lambda(n)\ne\pm1,\mu(n+t)\ne\pm1,\nonumber\\
		\end{cases}
	\end{eqnarray}
	are the characteristic functions of the subset of integers
	\begin{equation}\label{eq8855LN.210}
		\mathcal{N}_{\lambda}^{\pm \pm}(t)	=	\{n\geq 1: \lambda(n)=\pm1, \lambda(n+t)=\pm1\}.
	\end{equation} 
\end{lem}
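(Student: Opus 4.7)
The plan is to deduce Lemma \ref{lem8855LN.200} directly from Lemma \ref{lem8833LN.200A} by recognizing that the characteristic function of an intersection of two sets equals the pointwise product of the characteristic functions of those sets. Specifically, I would write
\[
\mathcal{N}_{\lambda}^{\pm\pm}(t) = \mathcal{N}_{\lambda}^{\pm} \cap \bigl(\mathcal{N}_{\lambda}^{\pm} - t\bigr),
\]
where $\mathcal{N}_{\lambda}^{\pm} - t = \{n \geq 1 : n+t \in \mathcal{N}_{\lambda}^{\pm}\}$, and then observe that the indicator of this intersection factors as the product of the two single-sign indicators.

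The key steps, in order, would be as follows. First, apply Lemma \ref{lem8833LN.200A} to the variable $n$ to obtain that $\bigl(\tfrac{1\pm\lambda(n)}{2}\bigr)$ is the indicator of $\mathcal{N}_{\lambda}^{\pm}$. Second, apply the same lemma with $n$ replaced by $n+t$, which is legitimate since the Liouville function is defined on all positive integers and $n+t \geq 1$ for sufficiently large $n$ (and the small initial range can be discarded without affecting the characteristic-function identity). This gives that $\bigl(\tfrac{1\pm\lambda(n+t)}{2}\bigr)$ equals $1$ when $\lambda(n+t)=\pm1$ and $0$ otherwise. Third, multiply the two expressions; since each factor takes only the values $0$ and $1$, the product is $1$ precisely when both factors are simultaneously $1$, which is exactly the condition defining $\mathcal{N}_{\lambda}^{\pm\pm}(t)$, and is $0$ in every other case. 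A brief case check over the four sign combinations $(+,+),(+,-),(-,+),(-,-)$ then confirms the claim uniformly.

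There is no real obstacle here: the statement is a formal manipulation of indicator functions, and its proof is essentially a one-line consequence of the preceding lemma together with the definition \eqref{eq8855LN.210} of $\mathcal{N}_{\lambda}^{\pm\pm}(t)$. The only mild bookkeeping point is to be explicit that the four choices of signs in \eqref{eq8855LN.200} are independent, so that the lemma actually packages four separate indicator identities into a single compact statement; I would address this by stating once that the argument is identical in each of the four sign configurations.
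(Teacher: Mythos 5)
Your proposal is correct: the paper states this lemma without any proof, treating it as an immediate consequence of the single-sign case in Lemma \ref{lem8833LN.200A}, and your observation that the product of two $\{0,1\}$-valued indicators is the indicator of the intersection is exactly the intended justification. (Note in passing that the occurrences of $\mu(n+t)$ in the lemma's statement are typos for $\lambda(n+t)$, and the ``$0$ if'' clause should read ``otherwise''; your argument correctly supplies the intended meaning.)
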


\subsection{$k$-Signs Patterns Liouville Characteristic Functions} \label{S5151K}
The general sign patterns of a vector of values of Liouville functions is consider here. A slight change in notation to simplify the formulas is introduced in this subsection. \\

Let $k\geq1$ be an integer. Define the integer $k$-tuple
\begin{equation}\label{eq5151K.300L}
	\textbf{\textit{a}}=(a_1,a_2, \ldots ,a_k),
\end{equation}
where $0\leq a_1<a_2<\cdots<a_k\leq x$, and the $k$-sign pattern
\begin{equation}\label{eq5151K.310L}
	\boldsymbol{\epsilon}=(\epsilon_1,\epsilon_2,\ldots ,\epsilon_k),
\end{equation} 
where $\epsilon_i\in \{-1,1\}$. The same principle used for single-sign and double-sign patterns is extended to the general $k$-sign patterns characteristic function of $k$-tuple of Liouville function values
\begin{equation}\label{eq5151K.330L}
	(\lambda(n+a_1), \lambda(n+a_2), \ldots ,\lambda(n+a_k))=(\epsilon_1,\epsilon_2,\ldots ,\epsilon_k).
\end{equation}

\begin{lem}\label{lem5151K.200L} Let $n\in \N$ be an integer, and let $\lambda(n)\in \{-1,1\}$ be the Liouville function. If $\textbf{\textit{a}}$ is an integer $k$-tuple, and $\boldsymbol{\epsilon}$ is a $k$-sign pattern, then,
	\begin{eqnarray}\label{eq5151K.200L}
		\lambda(\textbf{a},\boldsymbol{\epsilon},n)&=&\prod_{0\leq i<k}\left( \frac{1\pm\lambda(n+a_i)}{2}\right)\\	&=&
		\begin{cases}
			1 &\text{ \normalfont if } \lambda(n+a_1)=\epsilon_1,\ldots,\,\lambda(n+a_k)=\epsilon_k,\\
			0 &\text{ \normalfont if } \lambda(n+a_1)\ne\epsilon_1,\ldots,\,\lambda(n+a_k)\ne\epsilon_k,\nonumber\\
		\end{cases}
	\end{eqnarray} 
	is the characteristic functions of the subset of integers
	\begin{equation}\label{eq5151K.210L}
		\mathcal{N}_{\lambda}(\textbf{a},\boldsymbol{\epsilon})	=	\{n\geq 1: \lambda(n)=\epsilon_1,\ldots\,\lambda(n+k-1)=\epsilon_k\}.
	\end{equation} 
\end{lem}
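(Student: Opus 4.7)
The plan is to reduce this to a straightforward product of indicator functions, mirroring and generalizing the arguments for Lemma \ref{lem8833LN.200A} and Lemma \ref{lem8855LN.200}. The core observation is that since $\lambda$ takes values in $\{-1,1\}$, the single factor
$$\frac{1 + \epsilon_i\, \lambda(n+a_i)}{2}$$
equals $1$ precisely when $\lambda(n+a_i) = \epsilon_i$ and equals $0$ when $\lambda(n+a_i) = -\epsilon_i$. Under the $\pm$-shorthand of the statement, the sign inside the $i$-th factor is interpreted as $+$ when $\epsilon_i = +1$ and as $-$ when $\epsilon_i = -1$, so each factor is by itself the indicator function of the single sign condition $\lambda(n+a_i) = \epsilon_i$.

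With that reduction in hand, I would prove the lemma by induction on $k$. The base case $k=1$ is exactly Lemma \ref{lem8833LN.200A}. For the inductive step, one multiplies the $\{0,1\}$-valued indicator of the first $k$ sign conditions by the indicator of the new $(k+1)$-st condition, and invokes the elementary fact that the product of two $\{0,1\}$-valued functions is the indicator of the intersection of their supports. The resulting product is $1$ precisely on the set of $n$ for which $\lambda(n+a_i) = \epsilon_i$ holds for every $0 \leq i \leq k$, which by definition is $\mathcal{N}_\lambda(\textbf{\textit{a}}, \boldsymbol{\epsilon})$ in \eqref{eq5151K.210L}.

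There is essentially no analytic content here: the only property of $\lambda$ used is that it is $\{-1,1\}$-valued, and the only combinatorial fact used is that a product of indicator functions is the indicator of the intersection. The \emph{main obstacle}, such as it is, is purely notational: keeping the symbolic $\pm$ in the displayed product aligned with the explicit sign vector $\boldsymbol{\epsilon}$ in the conclusion. Once this alignment is fixed (and one also corrects the minor inconsistency whereby the statement writes $\lambda(n+k-1)$ in \eqref{eq5151K.210L} where $\lambda(n+a_{k-1})$ is intended), the lemma follows immediately.
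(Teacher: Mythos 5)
Your proof is correct. The paper states this lemma without any proof at all, so there is nothing to compare against; your argument --- each factor $\bigl(1+\epsilon_i\lambda(n+a_i)\bigr)/2$ is the indicator of the event $\lambda(n+a_i)=\epsilon_i$ because $\lambda$ is $\{-1,1\}$-valued, and a product of $\{0,1\}$-valued indicators is the indicator of the intersection --- is exactly the elementary verification the paper leaves implicit, and it generalizes Lemmas \ref{lem8833LN.200A} and \ref{lem8855LN.200} in the intended way. You are also right to flag the notational slips in the statement (the bare $\pm$ should read $\epsilon_i$, the second case of the display should say ``some $\lambda(n+a_i)\ne\epsilon_i$'' rather than a conjunction of inequalities, and \eqref{eq5151K.210L} should involve $\lambda(n+a_i)$ rather than $\lambda(n+i)$); your proof handles the evidently intended reading.
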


\section{Sign Patterns Counting Functions for the Liouville Function} \label{S8844LN}
The analysis of the single-sign patterns for the Liouville and Mobius functions are well known, but are included here as a reference.
\subsection{Single-Sign Patterns-Liouville Counting Function}
The single-sign pattern counting functions $\mathcal{N}_{\lambda}^{+}(x)=\#\{n\leq x:\lambda(n)=1\}$ and $\mathcal{N}_{\lambda}^{+}(x)=\#\{n\leq x:\lambda(n)=-1\}$ of the Liouville function over the integers have the simplest analysis.

\begin{lem}\label{lem8849LN.200} If $\lambda(n)\in \{-1,1\}$ is the Liouville function, then,
	\begin{equation}\label{eq8849LN.200A}
	\mathcal{N}_{\lambda}^{+}(x)=\frac{x}{2}+O\left( xe^{-c\sqrt{\log x}}\right),\nonumber
	\end{equation}
	and 
	\begin{equation}\label{eq8849LN.200B}
	\mathcal{N}_{\lambda}^{-}(x) =\frac{x}{2}+O\left(xe^{-c\sqrt{\log x}}\right), \nonumber
	\end{equation}
\end{lem}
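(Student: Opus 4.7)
The plan is to reduce both estimates to the classical bound on the partial sums of the Liouville function, which is a standard consequence of the prime number theorem with de la Vallée Poussin's error term.

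First, I would apply Lemma \ref{lem8833LN.200A} to rewrite the counting function as
\begin{equation*}
\mathcal{N}_{\lambda}^{+}(x)=\sum_{n\leq x}\left(\frac{1+\lambda(n)}{2}\right)=\frac{1}{2}[x]+\frac{1}{2}L(x),
\end{equation*}
where $L(x)=\sum_{n\leq x}\lambda(n)$ is the summatory Liouville function. Similarly, $\mathcal{N}_{\lambda}^{-}(x)=\frac{1}{2}[x]-\frac{1}{2}L(x)$. Thus both statements reduce to the single estimate $L(x)=O\bigl(xe^{-c\sqrt{\log x}}\bigr)$.

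Next, I would invoke the standard bound $L(x)\ll xe^{-c\sqrt{\log x}}$, which is equivalent in strength to the prime number theorem with the classical error term. The cleanest derivation is via Perron's formula applied to the Dirichlet series
\begin{equation*}
\sum_{n\geq 1}\frac{\lambda(n)}{n^{s}}=\frac{\zeta(2s)}{\zeta(s)},
\end{equation*}
shifting the contour past the line $\Re(s)=1$ into the classical zero-free region $\sigma>1-c/\log(|t|+2)$ and noting that the integrand has no singularity there (the pole of $\zeta(2s)$ at $s=1/2$ lies far to the left, and the only relevant behaviour on $\Re(s)=1$ is the non-vanishing of $\zeta(s)$). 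Optimising the contour parameter yields the exponential savings $e^{-c\sqrt{\log x}}$.

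The step that carries all the analytic weight is the zero-free region input; once that is quoted from the literature, the algebraic manipulation is immediate. Substituting the bound on $L(x)$ into the two displayed identities produces both stated asymptotic formulas with the same implied constant $c>0$, completing the proof.
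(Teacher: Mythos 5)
Your proof is correct and follows essentially the same route as the paper: rewrite $\mathcal{N}_{\lambda}^{\pm}(x)$ via the characteristic function $\tfrac{1}{2}(1\pm\lambda(n))$ and then quote the classical bound $\sum_{n\leq x}\lambda(n)=O\bigl(xe^{-c\sqrt{\log x}}\bigr)$ (the paper's Theorem \ref{thm2222LN.500}). The extra detail you give on deriving that bound from $\zeta(2s)/\zeta(s)$ and the zero-free region is just an unpacking of the cited input, not a different argument.
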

where $c>0$ is an absolute constant.
\begin{proof}[\textbf{Proof}] Utilize the characteristic function, Lemma \ref{lem8833LN.200A}, to express the counting function in the form
	\begin{eqnarray}\label{eq8844LN.100A}
		\mathcal{N}_{\lambda}^{+}(x)&=&\sum_{\substack{n\leq x\\ \lambda(n)=1}}1\\
		&=&\sum_{n\leq x}\left( \frac{1+\lambda(n)}{2}\right)\nonumber \\ &=&\frac{x}{2}+O\left( xe^{-c\sqrt{\log x}}\right)\nonumber, 
	\end{eqnarray}
respectively. The error terms follow from Theorem \ref{thm2222LN.500}. The other case has a similar proof.
\end{proof}
In terms of the single-sign pattern counting functions, the summatory function has the asymptotic formula
\begin{equation}\label{eq8844LN.100C}
	\mathcal{N}_{\lambda}(x)=\sum_{n\leq x}\lambda(n)=\mathcal{N}_{\lambda}^{+}(x)-\mathcal{N}_{\lambda}^{-}(x)=O\left( xe^{-c\sqrt{\log x}}\right). 
\end{equation}
Basically, it is a different form of the Prime Number Theorem
\begin{equation}\label{eq8844LN.100D}
	\pi(x)=\li(x)+O\left(xe^{-c\sqrt{\log x}} \right) ,	
\end{equation}
where $\li(x)=\int_2^2(\log t)^{-1}dt$ is the logarithm integral, and $c>0$ is an absolute constant, see \cite[Eq.~27.12.5]{DLMF}, \cite[Theorem 3.10]{EL1985}, et alii. \\

\subsection{Double-Sign Patterns-Liouville Counting Function}
The counting functions for the single-sign patterns $\lambda(n)=1$ and $\lambda(n)=-1$ are extended to the counting functions for the double-sign patterns 
\begin{equation}\label{eq8877LN.100}
	(\lambda(n),\lambda(n+t))=(\pm1,\pm1).
\end{equation}

The double-sign patterns counting functions are defined by
\begin{equation}\label{eq8877LN.110A}
	\mathcal{N}_{\lambda}^{++}(t,x)=\sum_{\substack{n\leq x\\ \lambda(n)=1,\; \lambda(n+t)=1}}1=\sum_{\substack{n\leq x\\ n\in \mathcal{N}_{\lambda}^{++}(t)}}1,
\end{equation}
\begin{equation}\label{eq8877LN.110B}
	\mathcal{N}_{\lambda}^{+-}(t,x)=\sum_{\substack{n\leq x\\ \lambda(n)=1,\; \lambda(n+t)=-1}}1=\sum_{\substack{n\leq x\\ n\in \mathcal{N}_{\lambda}^{+-}(t)}}1,
\end{equation}
\begin{equation}\label{eq8877LN.110C}
	\mathcal{N}_{\lambda}^{-+}(t,x)=\sum_{\substack{n\leq x\\ \lambda(n)=-1,\; \lambda(n+t)=1}}1=\sum_{\substack{n\leq x\\ p\in \mathcal{N}_{\lambda}^{-+}(t)}}1,
\end{equation}
\begin{equation}\label{eq8877LN.110D}
	\mathcal{N}_{\lambda}^{- -}(t,x)=\sum_{\substack{n\leq x\\ \lambda(n)=-1,\; \lambda(n+t)=-1}}1=\sum_{\substack{n\leq x\\ p\in \mathcal{N}_{\lambda}^{- -}(t)}}1.
\end{equation}

The double-sign patterns counting functions \eqref{eq8877LN.110A} to \eqref{eq8877LN.110D} are precisely the counting functions of the subsets of integers
\begin{multicols}{2}
	\begin{enumerate}
		\item $\mathcal{N}_{\lambda}^{++}(t)\subset \N$ ,
		\item $\mathcal{N}_{\lambda}^{+-}(t)\subset \N$ ,
		\item $\mathcal{N}_{\lambda}^{-+}(t)\subset \N$ ,
		\item $\mathcal{N}_{\lambda}^{--}(t)\subset \N$ ,
	\end{enumerate}
\end{multicols}
defined in \eqref{eq8855LN.210}. 

\begin{lem}\label{lem8877LN.300} Let $x$ be a large number, and let $t\ne0$ be a fixed integer. If $\lambda: \mathbb{Z} \longrightarrow \{-1,1\}$ is the Liouville function, then, 
	\begin{equation}\label{eq8877LN.300}
		\mathcal{N}_{\lambda}^{\pm \pm}(t,x)= \frac{x}{4}+O\left(\frac{x}{(\log \log x)^{1/2-\varepsilon}} \right),\nonumber
	\end{equation}
	where $\varepsilon>0$ is an arbitrary small number.
\end{lem}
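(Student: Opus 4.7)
The plan is to reduce the double-sign pattern count to a main term plus a shifted two-point correlation of the Liouville function, and then invoke the deep correlation estimates of Matomäki--Radziwiłł--Tao referenced in \cite{TT2015} and \cite{MT2015}. First I would use the characteristic function from Lemma \ref{lem8855LN.200} and expand
\begin{equation*}
\left(\frac{1\pm\lambda(n)}{2}\right)\left(\frac{1\pm\lambda(n+t)}{2}\right) = \frac{1}{4}\Big(1 \pm \lambda(n) \pm \lambda(n+t) + \lambda(n)\lambda(n+t)\Big),
\end{equation*}
where the two middle signs match the chosen pattern. Summing over $n\leq x$ produces the identity
\begin{equation*}
\mathcal{N}_{\lambda}^{\pm\pm}(t,x) = \frac{1}{4}[x] \pm \frac{1}{4}\sum_{n\leq x}\lambda(n) \pm \frac{1}{4}\sum_{n\leq x}\lambda(n+t) + \frac{1}{4}\sum_{n\leq x}\lambda(n)\lambda(n+t).
\end{equation*}

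Next I would dispose of the two linear $\lambda$-sums. By the asymptotic \eqref{eq8844LN.100C}, essentially equivalent to the Prime Number Theorem, both $\sum_{n\leq x}\lambda(n)$ and $\sum_{n\leq x}\lambda(n+t) = \sum_{n\leq x+t}\lambda(n) - \sum_{n\leq t}\lambda(n)$ are $O\!\left(xe^{-c\sqrt{\log x}}\right)$, hence absorbed into the admissible error. After this step, the only term left to control is the shifted correlation $\sum_{n\leq x}\lambda(n)\lambda(n+t)$.

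The main obstacle, and the only nontrivial input, is the quantitative two-point correlation bound
\begin{equation*}
\sum_{n\leq x}\lambda(n)\lambda(n+t) \;=\; O\!\left(\frac{x}{(\log\log x)^{1/2-\varepsilon}}\right),
\end{equation*}
valid for every fixed nonzero integer $t$ and arbitrary $\varepsilon>0$. This is precisely the quantitative form of the Matomäki--Radziwiłł--Tao theorem on sign patterns of the Liouville function \cite[Corollary 1.7]{TT2015}, \cite{MT2015}. I would cite this result rather than reprove it, since it rests on the Matomäki--Radziwiłł theorem on multiplicative functions in short intervals combined with Tao's entropy-decrement argument, which is well beyond the scope of the present note.

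Finally, I would combine everything: the main term contributes $\tfrac{1}{4}[x]$, the PNT-type terms contribute $O(xe^{-c\sqrt{\log x}})$, and the correlation term contributes $O(x/(\log\log x)^{1/2-\varepsilon})$. Since the latter dominates the former for large $x$, the four pattern counts share the same asymptotic
\begin{equation*}
\mathcal{N}_{\lambda}^{\pm\pm}(t,x) = \frac{1}{4}[x] + O\!\left(\frac{x}{(\log\log x)^{1/2-\varepsilon}}\right),
\end{equation*}
which establishes the lemma and, dividing by $x$, immediately gives the natural density $1/4$ claimed in Theorem \ref{thm2266LS.500}.
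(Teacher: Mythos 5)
Your decomposition is exactly the paper's: both expand the characteristic function of Lemma \ref{lem8855LN.200} into $\tfrac{1}{4}\left(1\pm\lambda(n)\pm\lambda(n+t)+\lambda(n)\lambda(n+t)\right)$, sum over $n\le x$ to extract the main term $\tfrac{1}{4}[x]$, absorb the two linear sums via the Prime Number Theorem estimate of Theorem \ref{thm2222LN.500}, and reduce the whole lemma to the shifted correlation $\sum_{n\le x}\lambda(n)\lambda(n+t)$.

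The genuine issue is your source for that correlation bound. The results you cite, \cite[Corollary 1.7]{TT2015} and \cite{MT2015}, do not give the unweighted estimate $\sum_{n\le x}\lambda(n)\lambda(n+t)=o(x)$: Tao's theorem is the \emph{logarithmically averaged} two-point Chowla bound $\sum_{n\le x}\lambda(n)\lambda(n+t)n^{-1}=o(\log x)$, and the Matomaki--Radziwill--Tao sign-pattern results yield positive lower densities, not an asymptotic count with density exactly $1/4$. The unweighted two-point bound is not a theorem you can simply cite off the shelf; converting the logarithmic average into the arithmetic average is precisely the step the paper devotes Lemmas \ref{lem7766N.100} and \ref{lem7766.400} to (feeding into Theorem \ref{thm7766N.200}, whose quantitative input is the logarithmic estimate of \cite[Corollary 2]{HH2022}). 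So your proposal has a gap exactly where the paper inserts its auxiliary partial-summation argument: as written, the citation does not support the claimed $O\!\left(x(\log\log x)^{-1/2+\varepsilon}\right)$ bound for the unweighted sum, and without some substitute for that passage the lemma's error term is not established.
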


\begin{proof}[\textbf{Proof}] Without loss in generality, consider the pattern $(\lambda(n),\lambda(n+t))=(+1,+1)$. Now, use Lemma \ref{lem8855LN.200} to express the double-sign pattern counting function as 
	\begin{eqnarray}   \label{eq8877LN.310}
		4\mathcal{N}_{\lambda}^{++}(t,x)&=&\sum_{n\leq x}\lambda^{++}(t,n)\nonumber\\
		&=&\sum_{n\leq x}\left( 1+\lambda(n)\right) \left( 1+\lambda(n+t)\right)\nonumber\\
		&=&\sum_{n\leq x}\left( 1+\lambda(n)+\lambda(n+t)+\lambda(n)\lambda(n+t)\right)\\
		&=&\sum_{n\leq x}1+\sum_{n\leq x}\lambda(n)  +\sum_{n\leq x}\lambda(n+t)+\sum_{n\leq x}\lambda(n)\lambda(n+t)\nonumber\\
		&\geq&0\nonumber.
	\end{eqnarray}
	The first three finite sums on the last line have the following evaluations or estimates.
	\begin{enumerate}
		\item $ \displaystyle \sum_{n\leq x}1=x, $\tabto{10cm} 
		\item $ \displaystyle \sum_{n\leq x}\lambda(n)=O \left (xe^{-c\sqrt{\log x}}\right )$,\tabto{8cm} see Theorem \ref{thm2222LN.500}.\\
		
		\item $ \displaystyle \sum_{n\leq x}\lambda(n+t)=O \left (xe^{-c\sqrt{\log x}}\right )$, \tabto{8cm}see Theorem \ref{thm2222LN.500},\\
		\item $ \displaystyle \sum_{n\leq x}\lambda(n)\lambda(n+t)=O\left(\frac{x}{(\log \log x)^{1/2-\varepsilon}} \right)$, \tabto{8cm}see Theorem \ref{thm7766N.200} ,
	\end{enumerate}
	where $[x]$ is the largest integer function, $c>0$ is an absolute constant, and $\varepsilon>0$. Summing these evaluations or estimates verifies the claim for $\mathcal{N}_{\lambda}^{++}(t,x)\geq0$. The verifications for the next three double-sign pattern counting functions $\mathcal{N}_{\lambda}^{+-}(t,x)\geq0$, $\mathcal{N}_{\lambda}^{-+}(t,x)\geq0$, and $\mathcal{N}_{\lambda}^{--}(t,x)\geq0$ are similar.
\end{proof}

\section{Equidistribution of Liouville Sign Patterns}\label{S5225LN}

\subsection{Equidistribution of Single-Sign Patterns}
The counting functions are defined by
\begin{equation}\label{eq5225LC.200}
	\mathcal{N}_{\lambda}^{+}(x)=\#\{n\leq x: \lambda(n)=1\}
\end{equation}
and \begin{equation}\label{eq5225LC.210}
\mathcal{N}_{\lambda}^{-}(x)=\#\{n\leq x: \lambda(n)=-1\}. 
\end{equation}
The corresponding densities functions are defined by

\begin{equation}\label{eq5225LC.220}
\delta_{\lambda}^{+}=	\lim_{x\to \infty}\frac{\mathcal{N}_{\lambda}^{+}(x)}{x}\quad \text{ and }\quad \delta_{\lambda}^{-}=	\lim_{x\to \infty}\frac{\mathcal{N}_{\lambda}^{-}(x)}{x}.
\end{equation}
\begin{thm}\label{thm5225LC.200} If $x$ is a large number, then the single-sign patterns of the Liouville function are equidistributed on the interval $[1,x]$. Specifically,
\begin{equation}\label{eq5225LC.230}
	\delta_{\lambda}^{+}=\frac{1}{2}\quad \quad \text{ \normalfont and }\quad \quad \delta_{\lambda}^{-}=\frac{1}{2}\nonumber.
\end{equation}
\end{thm}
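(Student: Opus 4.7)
The proof is essentially immediate from Lemma \ref{lem8849LN.200}, so the plan is simply to translate the asymptotic counting estimate into a density statement.

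My approach would be to substitute the asymptotic formulas for $\mathcal{N}_{\lambda}^{+}(x)$ and $\mathcal{N}_{\lambda}^{-}(x)$ directly into the definitions \eqref{eq5225LC.220}. Concretely, I would write
\[
\frac{\mathcal{N}_{\lambda}^{+}(x)}{x}=\frac{1}{x}\left(\frac{1}{2}[x]+O\!\left(xe^{-c\sqrt{\log x}}\right)\right)=\frac{[x]}{2x}+O\!\left(e^{-c\sqrt{\log x}}\right),
\]
and similarly for $\mathcal{N}_{\lambda}^{-}(x)/x$. Then I would observe that $[x]/x\to 1$ as $x\to \infty$, and that $e^{-c\sqrt{\log x}}\to 0$ as $x\to \infty$, so passing to the limit yields $\delta_{\lambda}^{+}=1/2$, and by symmetry $\delta_{\lambda}^{-}=1/2$.

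As a consistency check I would note that $\delta_{\lambda}^{+}+\delta_{\lambda}^{-}=1$, which is forced since $\mathcal{N}_{\lambda}^{+}(x)+\mathcal{N}_{\lambda}^{-}(x)=[x]$ for every $x$, and therefore the two densities must sum to $1$. This also shows that the equidistribution statement $\delta_{\lambda}^{+}=\delta_{\lambda}^{-}=1/2$ is equivalent to the single qualitative fact that $\mathcal{N}_{\lambda}(x)=\mathcal{N}_{\lambda}^{+}(x)-\mathcal{N}_{\lambda}^{-}(x)=o(x)$.

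There is no real obstacle here, since all the analytic content has already been packaged into Lemma \ref{lem8849LN.200}: the cancellation $\sum_{n\leq x}\lambda(n)=O(xe^{-c\sqrt{\log x}})$ is a restatement of the Prime Number Theorem, as noted in \eqref{eq8844LN.100C}--\eqref{eq8844LN.100D}. The only thing to be careful about is that the error term is genuinely $o(x)$, which it is since $e^{-c\sqrt{\log x}}\to 0$; this is what allows the error contribution to vanish in the limit and produce the exact value $1/2$ rather than merely a bounded oscillation.
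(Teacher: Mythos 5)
Your proposal is correct and follows exactly the paper's own (one-line) proof, which simply invokes Lemma \ref{lem8849LN.200} to evaluate the limits in \eqref{eq5225LC.220}; you merely spell out the substitution and limit computation in more detail. The consistency check $\delta_{\lambda}^{+}+\delta_{\lambda}^{-}=1$ is a nice addition but not part of the paper's argument.
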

\begin{proof}[\textbf{Proof}] Utilize Lemma \ref{lem8849LN.200} to evaluate the limits in \eqref{eq5225LC.220}.
\end{proof}
The nontrivial result for the summatory Liouville function
\begin{eqnarray}\label{eq5225LC.240}
	\sum_{n\leq x}\lambda(n)
	&=&\mathcal{N}_{\lambda}^{+}(x)-\mathcal{N}_{\lambda}^{-}(x)=O\left( xe^{-c\sqrt{\log x}}\right)
\end{eqnarray}
has no main term. It vanishes because the number of single-sign patterns $\mathcal{N}_{\lambda}^{+}(x)$ and $\mathcal{N}_{\lambda}^{-}(x)$ have the same main terms, see Lemma \ref{lem8849LN.200}. This is implied by the equidistribution of the single sign patterns.

\subsection{Equidistribution of Double-Sign Patterns}
The equidistribution results for single-sign patterns are extended to equidistribution of the double-sign patterns. \\

Recall that the double sign counting function is defined by
\begin{equation}\label{eq5225LN.500}
\mathcal{N}_{\lambda}^{\pm\pm}(t,x)=\#\{n\leq x:\lambda(n)=\pm1 ,\lambda(n+t)=\pm1\}, 
\end{equation}
and the natural density of a double-sign pattern is defined by
\begin{equation}\label{eq5225LN.510}
	\delta_{\lambda}^{\pm\pm}(t)=  \lim_{x\to \infty}\;	\frac{\#\{n\leq x:\lambda(n)=\pm1 ,\lambda(n+t)=\pm1\}}{x}.
\end{equation}

\begin{proof}[\textbf{Proof of Theorem {\normalfont\ref{thm2266LS.500}}}] For large $x$, the asymptotic formula for $\mathcal{N}_{\lambda}^{\pm\pm}(t,x)>0$ is proved in Lemma \ref{lem8877LN.300}. Next, compute the limit of the proportion of double-sign pattern
	\begin{eqnarray}\label{eq5225.550}
		\delta_{\lambda}^{\pm\pm}(t)&=&  \lim_{x\to \infty}\;	\frac{\#\{n\leq x:\lambda(n)=\pm1 ,\lambda(n+t)=\pm1\}}{x}\nonumber\\
		&=&\lim_{x\to \infty}\;	\frac{[x]+O\left(x(\log \log x)^{-1/2+\varepsilon} \right)}{4x}\nonumber\\
		&=&\frac{1}{4}.\nonumber
	\end{eqnarray} 
	This proves that the double-sign patterns $++$, $+-$, $-+$, and $--$ are equidistributed on the interval $[1,x]$
	as $x\to \infty$.
\end{proof}

\begin{exa}\label{exa5225LN.600}{\normalfont Let $t=1$. By Theorem \ref{thm2266LS.500}, in any sufficiently large interval $[1,x]$, the number of any double-sign pattern $\lambda(n)=\pm1, \lambda(n+1)=\pm1$ is
		\begin{equation}\label{eq5225LN.610}
			\mathcal{N}_{\lambda}^{\pm\pm}(t,x)	=\delta_{\lambda}^{\pm\pm}(t)x+o(x)=\frac{1}{4}x+O\left(\frac{x}{(\log \log x)^{1/2-\varepsilon}} \right).
		\end{equation}		
		The numerical data for $x=10^5$, shows that the actual value of the autocorrelation function is
		\begin{equation}\label{eq5225LN.630}
			\sum_{n\leq x}\lambda(n)\lambda(n+1)
			=68,
		\end{equation}
		and the actual values of the double-sign pattern counting functions are tabulated below.
\begin{center}
\begin{tabular}{ c|c|c|c } 
$\lambda(n)$ & $\lambda(n+1)$ &Actual Count &Expected $\mathcal{N}_{\lambda}^{\pm\pm}(1,x)$\\
\hline 
$+1$ & $+1$ & $99492/4$&$100000/4+o(x)$ \\ 
$+1$ & $-1$ & $99932/4$ &$100000/4+o(x)$\\ 
$-1 $& $+1 $& $99932/4$& $100000/4+o(x)$\\
$-1$ &$ -1 $& $100644/4$&$100000/4+o(x)$ \\		
\end{tabular}
\end{center}
		Given the small scale of this experiment, $x=10^5$, the actual data fits the prediction very well. The tiny differences among the actual values, (in third column), and the prediction by the double-sign pattern counting functions $\mathcal{N}_{\lambda}^{\pm\pm}(1,x)$ seem to be properties of the races between the different subsets of integers $\mathcal{N}_{\lambda}^{\pm \pm}(t)$ attached to the double-sign patterns, see \eqref{eq8855LN.210}. For an introduction to the literature in comparative number theory, prime number races, and similar topics, see \cite{GM2004}, et cetera.
	}
\end{exa}

\section{Applications to Normal Numbers I}\label{eq6677I}
The earliest study was centered on the distributions of the digits in the decimal expansions of algebraic irrational numbers such as $\sqrt{2}=1.414213562 \ldots$. This problem is known as the Borel conjecture. \\

The digits $w_n\in \{0,1,2,\ldots, b-1\}$ in the $b$-adic expansion $\alpha=\sum_{n\geq0}w_nb^{-n}$ of a normal number are random, and uniformly distributed.

\begin{dfn} \label{dfn6677I.100}{\normalfont Let $b>1$ be an integer base. A real number $\alpha \in (0,1)$ is said to be \textit{simply normal in base $b$} if each digit  $w \in \{0,1,2,\ldots, b-1\}$ in its $b$-adic expansion $\alpha=\sum_{n\geq0}w_nb^{-n}$ occurs with probability $$P(w)=\frac{1}{b}.$$ 
	}
\end{dfn}

\begin{dfn} \label{dfn6677I.120}{\normalfont Let $b>1$ be an integer base, and let $k\geq$ be an integer. A real number $\alpha \in (0,1)$ is said to be \textit{normal in base $b$} if every sequence of digits $w=w_0w_1\cdots w_{k-1} \in \{0,1,2,\ldots, b-1\}^k$ occurs in its $b$-adic expansion $\alpha=\sum_{n\geq0}w_nb^{-n}$ with probability $$P(w)=\frac{1}{b^k}.$$ 
	}
\end{dfn}

\subsection{A Result for Simply Normal in Base $b=2$}\label{eq6677A}
The $2$-adic digits of the real number $\beta=0.w_1w_2w_3\cdots$ are generated by the map
\begin{equation}\label{eq6677I.200}
	n\longrightarrow w_n=\frac{1+\lambda(n)}{2} \in \{0,1\}.
\end{equation} 
The single digit pattern counting function is defined by \begin{equation}\label{eq6677I.210}
	\mathcal{N}_{\beta}(a,x)=\#\{w_n=a:n\leq x\},
\end{equation} and the natural digit density is defined by
\begin{equation}\label{eq6677I.220}
	p_{\beta}(a)=\lim_{x\to \infty}\frac{\mathcal{N}_{\beta}(a,x)}{x}=\lim_{x\to \infty}\frac{\mathcal{N}_{\lambda}^{\pm}(x)}{x}
	=\delta_{\lambda}^{\pm}.
\end{equation}
\begin{thm} \label{thm6677LN.200S} Let $\lambda:\mathbb{N} \longrightarrow \{-1,1\}$ be the Liouville function. Then, the computable real number
	\begin{equation} \label{eq6677LN.230}
	\beta=	\sum_{n \geq 1} \frac{1+\lambda(n)}{2^n}=1.16232463762392978595979733583622409170
		\ldots \nonumber, 
	\end{equation}	
	is simply normal number in base $2$.
\end{thm}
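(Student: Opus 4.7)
The plan is to reduce simple normality in base $2$ to the single-sign equidistribution already established in Theorem \ref{thm5225LC.200}, by interpreting $w_n = (1+\lambda(n))/2 \in \{0,1\}$ as the binary digits of $\beta$.

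First, I would verify that the map $n \mapsto w_n$ from \eqref{eq6677I.200} genuinely produces the binary digit sequence of $\beta$ in the sense of Definition \ref{dfn6677I.100}. Since $\lambda$ takes each of the values $\pm 1$ on a set of natural density $1/2$ (Theorem \ref{thm5225LC.200}), the sequence $(w_n)$ is neither eventually $0$ nor eventually $1$, so no dyadic-rational ambiguity arises. The integer/fractional bookkeeping is handled by noting that
\begin{equation}
\beta \;=\; \sum_{n \geq 1} \frac{1+\lambda(n)}{2^n} \;=\; \sum_{n \geq 1} \frac{w_n}{2^{n-1}},
\end{equation}
so the binary expansion of $\beta$ is the one-position shift $w_1 . w_2 w_3 w_4 \cdots$ of the fractional number $\beta/2 = 0.w_1 w_2 w_3 \cdots$; a one-position shift does not affect asymptotic digit frequencies.

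Second, I would rewrite the single-digit counting function \eqref{eq6677I.210} in terms of the single-sign counting functions of Section \ref{S5225LN}. From $w_n = (1+\lambda(n))/2$, the identities
\begin{equation}
\mathcal{N}_{\beta}(1,x) \;=\; \#\{n \leq x : \lambda(n) = +1\} \;=\; \mathcal{N}_{\lambda}^{+}(x), \qquad \mathcal{N}_{\beta}(0,x) \;=\; \mathcal{N}_{\lambda}^{-}(x)
\end{equation}
are immediate. Dividing by $x$ and passing to the limit, Theorem \ref{thm5225LC.200} (equivalently Lemma \ref{lem8849LN.200}) yields
\begin{equation}
p_{\beta}(0) \;=\; \delta_{\lambda}^{-} \;=\; \tfrac{1}{2}, \qquad p_{\beta}(1) \;=\; \delta_{\lambda}^{+} \;=\; \tfrac{1}{2},
\end{equation}
which is exactly the definition of simple normality in base $2$.

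There is no real obstacle here: the entire analytic content is already absorbed into Lemma \ref{lem8849LN.200}, whose error term $O(xe^{-c\sqrt{\log x}})$ comes from the Prime Number Theorem form of $\sum_{n \leq x} \lambda(n)$. The only care required is the interpretive step of identifying the sequence $(w_n)$ with the binary digits of $\beta$, and observing that the integer-part shift between $\beta$ and $\beta/2$ is harmless for digit-density statements.
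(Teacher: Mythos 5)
Your proposal follows essentially the same route as the paper's proof: identify $w_n=(1+\lambda(n))/2$ with the binary digits via \eqref{eq6677I.200}, equate $\mathcal{N}_{\beta}(a,x)$ with the single-sign counting functions, and invoke Theorem \ref{thm5225LC.200} to get $p_{\beta}(0)=p_{\beta}(1)=\tfrac{1}{2}$. Your additional remarks on the dyadic-rational ambiguity and the one-position shift between $\beta$ and $\beta/2$ are careful bookkeeping that the paper omits, but they do not change the argument.
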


\begin{proof}[\textbf{Proof}] By Theorem \ref{thm5225LC.200}, the precise probabilities of the values of the single digit pattern are the followings.
	\begin{enumerate}
		\item $ \displaystyle p_{\beta}(w_n=0)=\delta_{\lambda}^{-}=\frac{1}{2}, $\tabto{6cm} since $0=\frac{1+\lambda(n)}{2}$ with $\lambda(n)=-1$,
		\item $ \displaystyle p_{\beta}(w_n=1)=\delta_{\lambda}^{+}=\frac{1}{2}$,\tabto{6cm} since $1=\frac{1+\lambda(n)}{2}$ with $\lambda(n)=1$.
	\end{enumerate}
Each digit has the same probability
	\begin{equation}\label{eq6677L.240}
		p_{\beta}(w_n=0)=p_{\beta}(w_n=1)=\frac{1}{2},
	\end{equation}
Therefore, by Definition \ref{dfn6677I.100}, the real number $\beta$ is simply normal number.
\end{proof}

\subsection{A Result for Simply Normal in Base $b=4$}\label{eq6677B}
The approach used to prove simply normality in base $b=2$ is here to verify a result for simply normality in base $b=4$. The $4$-adic expansion of the number under analysis has the form
\begin{eqnarray} \label{eq2266N.800}
	\sum_{n \geq 1} \frac{1+\lambda(n)}{2^n}&=&1.16232463762392978595979733583622409170\nonumber\\
	&=&\sum_{n \geq 1} \frac{w_n}{b^n}. 
\end{eqnarray}	

The $n$th digit is defined by
\begin{equation}\label{eq6677L.830}
	w_n=\frac{1+\lambda(n)}{2}+\left( \frac{1+\lambda(n+1)}{2}\right )\cdot2.
\end{equation} 
This is generated by the double sign patterns
\begin{equation}\label{eq6677L.835}
	\left (\frac{1+\lambda(n)}{2}, \frac{1+\lambda(n+1)}{2}\right ).
\end{equation} 
 
\begin{proof}[{\normalfont \textbf{Proof of Theorem \ref{thm2266LN.850D}}}] By Theorem \ref{thm2266LS.500}, the precise probabilities for the double digit patterns are the followings.
	\begin{enumerate}
		\item $(0,0)=\left (\frac{1+\lambda(n)}{2}, \frac{1+\lambda(n+1)}{2}\right ),$ \tabto{7cm}$ \displaystyle p_{\beta}(w_n=0)=\delta_{\lambda}^{--}(1)=\frac{1}{4}, $ \\
		
\item $(0,1)=\left (\frac{1+\lambda(n)}{2}, \frac{1+\lambda(n+1)}{2}\right ),$ \tabto{7cm}$ \displaystyle p_{\beta}(w_n=1)=\delta_{\lambda}^{-+}(1)=\frac{1}{4}, $ \\

\item $(1,0)=\left (\frac{1+\lambda(n)}{2}, \frac{1+\lambda(n+1)}{2}\right ),$ \tabto{7cm}$ \displaystyle p_{\beta}(w_n=2)=\delta_{\lambda}^{+-}(1)=\frac{1}{4}, $ \\

\item $(1,1)=\left (\frac{1+\lambda(n)}{2}, \frac{1+\lambda(n+1)}{2}\right ),$ \tabto{7cm}$ \displaystyle p_{\beta}(w_n=3)=\delta_{\lambda}^{++}(1)=\frac{1}{4}, $ \\
	\end{enumerate}
Each of the 4 double digit patterns \eqref{eq6677L.830}, equivalently each digit $w_n\in \{0,1,2,3\}$, has the same probability
	\begin{equation}\label{eq6677L.840}
		p_{\beta}(w_n=0)=p_{\beta}(w_n=1)=p_{\beta}(w_n=2)=p_{\beta}(w_n=3)=\frac{1}{4}.
	\end{equation}
Therefore, by Definition \ref{dfn6677I.120}, the real number \eqref{eq2266N.800} is simply normal number in base $b=4$.
\end{proof}


\section{Appendix A: Basic Results for the Liouville Function}\label{S2222A}
Some standard results required in the proofs of the equidistributions of the sign patterns of the Liouville function are recorded in this section.

\subsection{Average Orders of Liouville Functions}\label{S2222LN}
\begin{thm} \label{thm2222LN.500} If $\lambda: \N\longrightarrow \{-1,1\}$ is the Liouville function, then, for any large number $x$, the following statements are true.
\begin{enumerate} [font=\normalfont, label=(\roman*)]
		\item $\displaystyle \sum_{n \leq x} \lambda(n)=\zeta(1/2)x^{1/2}+O \left (xe^{-c\sqrt{\log x}}\right )$, \tabto{8cm} unconditionally,
		\item $\displaystyle \sum_{n\leq x}\frac{\lambda(n)}{n}=O\left( e^{-c\sqrt{\log x}} \right ), $ \tabto{8cm} unconditionally,
\end{enumerate}where $c>0$ is an absolute constant.
\end{thm}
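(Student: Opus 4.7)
The plan is to reduce both parts to the classical Prime Number Theorem equivalents for the M\"obius function, namely
\[
M(y) := \sum_{m \leq y} \mu(m) = O\!\left(y e^{-c\sqrt{\log y}}\right), \qquad \sum_{m \leq y} \frac{\mu(m)}{m} = O\!\left(e^{-c\sqrt{\log y}}\right),
\]
which I would take as known inputs from the de la Vall\'ee Poussin zero-free region (as, for instance, in the Ellison reference already cited elsewhere in the paper). The bridge between $\lambda$ and $\mu$ is the identity $\lambda(n) = \sum_{d^2 \mid n} \mu(n/d^2)$, which comes from M\"obius inversion applied to the convolution relation $\lambda \ast 1 = \mathbf{1}_{\square}$, or equivalently from the Dirichlet-series factorization $\sum_{n \geq 1} \lambda(n) n^{-s} = \zeta(2s)/\zeta(s)$.

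For part (i), interchanging the order of summation yields
\[
\sum_{n \leq x} \lambda(n) \;=\; \sum_{d \leq \sqrt{x}} M\!\left(\frac{x}{d^2}\right).
\]
I would split the $d$-sum at the cutoff $d = x^{1/4}$. In the range $d \leq x^{1/4}$ one has $\log(x/d^2) \geq \tfrac{1}{2}\log x$, so the M\"obius bound contributes $\ll x\, e^{-c'\sqrt{\log x}}\sum_{d \geq 1} d^{-2}$, which is of the required order. For $d > x^{1/4}$ the trivial bound $|M(y)| \leq y$ produces at most $O(x^{3/4})$, which is absorbed into the error term. The advertised main term $\zeta(1/2)\sqrt{x}$ is itself $o(x e^{-c\sqrt{\log x}})$, so it appears only formally; on a more refined derivation via Perron's formula it arises as the residue of $\zeta(2s)/\zeta(s) \cdot x^s/s$ at the pole $s = 1/2$ coming from $\zeta(2s)$. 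For part (ii) the same inversion gives
\[
\sum_{n \leq x} \frac{\lambda(n)}{n} \;=\; \sum_{d \leq \sqrt{x}} \frac{1}{d^2} \sum_{m \leq x/d^2} \frac{\mu(m)}{m},
\]
and applying the second M\"obius bound for $d \leq x^{1/4}$ together with the trivial estimate $|\sum_{m \leq y} \mu(m)/m| = O(\log y)$ for $d > x^{1/4}$ again produces $O(e^{-c'\sqrt{\log x}})$.

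The hard part is not the elementary combinatorial reduction above but the unconditional M\"obius estimates themselves: both are equivalent in strength to the classical PNT and ultimately rest on the de la Vall\'ee Poussin zero-free region for $\zeta(s)$ combined with a Perron contour argument. Since these are standard textbook results and the excerpt already treats the corresponding prime-counting estimate as a cited fact, I would invoke them rather than reprove them from scratch inside the appendix.
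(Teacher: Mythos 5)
Your reduction is correct, but note that the paper itself supplies no proof of this theorem at all: it is stated in Appendix A as a known fact, with only a pointer to the literature on the summatory Liouville function. What you have done is fill in the standard deduction explicitly, via the convolution identity $\lambda=\mu\ast \mathbf{1}_{\square}$ (equivalently $\sum_{n\ge1}\lambda(n)n^{-s}=\zeta(2s)/\zeta(s)$), the split of $\sum_{d\le\sqrt{x}}M(x/d^2)$ at $d=x^{1/4}$, and the de la Vall\'ee Poussin bounds for $\sum_{m\le y}\mu(m)$ and $\sum_{m\le y}\mu(m)/m$. That reduction is sound: for $d\le x^{1/4}$ one has $\log(x/d^2)\ge\tfrac12\log x$, so the exponential saving survives with the smaller constant $c/\sqrt{2}$ after summing $\sum_{d\ge1}d^{-2}$, and the tail $d>x^{1/4}$ contributes only $O(x^{3/4})$ in part (i) and $O(x^{-1/4}\log x)$ in part (ii), both negligible against the stated error terms. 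So your proposal actually does more than the paper, which treats the result as a black box. The one quibble is the ``main term'': the residue of $\zeta(2s)x^s/(s\zeta(s))$ at $s=1/2$ is $x^{1/2}/\zeta(1/2)$, not $\zeta(1/2)x^{1/2}$ as the theorem (and, implicitly, your closing remark) has it; since any multiple of $x^{1/2}$ is swallowed by $O(xe^{-c\sqrt{\log x}})$, the statement is unaffected, but the constant as printed is not the one Perron's formula produces, and it would be cleaner to either correct it or drop the $x^{1/2}$ term entirely, exactly as your elementary argument does.
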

The most recent research on the summatory Liouville function seems to be \cite{MT2021}.\\

\subsection{Twisted Exponential Sums}
One of the earliest result for exponential sum with multiplicative coefficients is stated below.

\begin{thm} \label{thm3970LE.300} {\normalfont (\cite{DH1937})} If $\alpha\ne0$ is a real number, and $c>0$ is an arbitrary constant, then
	\begin{enumerate} [font=\normalfont, label=(\roman*)]
		\item $\displaystyle \sup_{\alpha\in\R}\sum_{n \leq x} \lambda(n)e^{i 2 \pi  \alpha n}<\frac{c_3x}{(\log x)^{c}}$, \tabto{8cm} unconditionally,
		\item $\displaystyle \sup_{\alpha\in\R}\sum_{n \leq x} \frac{\lambda(n)}{n}e^{i 2 \pi  \alpha n}<\frac{c_4}{(\log x)^{c}}, $ \tabto{8cm} unconditionally,
	\end{enumerate}
	where $c_3=c_3(c)>0$ and $c_4=c_4(c)>0$ are constants depending on $c$, as the number $x \to \infty$.
\end{thm}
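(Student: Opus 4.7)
The plan is to reduce the Liouville exponential sums in (i)--(ii) to the analogous Möbius exponential sums, and then invoke Davenport's classical uniform bound
\[
\sup_{\beta \in \R}\Bigl|\sum_{m \le y}\mu(m)e^{2\pi i \beta m}\Bigr| \ll_A y(\log y)^{-A} \qquad (A > 0 \text{ arbitrary}).
\]
The bridge between $\lambda$ and $\mu$ is the Dirichlet-convolution identity $\lambda = 1_{\square} * \mu$, equivalently $\lambda(n) = \sum_{d^2 \mid n}\mu(n/d^2)$.

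For part (i), substituting this identity and swapping the order of summation gives
\[
\sum_{n \le x}\lambda(n)\,e^{2\pi i \alpha n} \;=\; \sum_{d \le \sqrt{x}}\;\sum_{m \le x/d^2}\mu(m)\,e^{2\pi i (\alpha d^2) m}.
\]
I would split the outer sum at $d = x^{1/4}$. On the range $d \le x^{1/4}$, one has $x/d^2 \ge \sqrt{x}$, so Davenport's estimate (applied with $\beta = \alpha d^2$, exploiting its uniformity in the frequency) majorises the inner sum by $\ll_A (x/d^2)(\log x)^{-A}$; summing over $d \le x^{1/4}$ contributes $\ll_A x(\log x)^{-A}\sum_{d}d^{-2}\ll x(\log x)^{-A}$. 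On $d > x^{1/4}$ the trivial estimate $\sum_{m \le x/d^2} 1 = O(x/d^2)$ gives $\ll x^{3/4}$. Taking $A > c$ completes (i).

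For part (ii), I would avoid direct partial summation applied to (i): letting $S(t) := \sum_{n \le t}\lambda(n)e^{2\pi i\alpha n}$, the trivial bound $|S(t)| \le t$ on the short initial segment forces $\int_1^x S(t)\,t^{-2}\,dt$ to absorb an $O(1)$ contribution, which exceeds the target $(\log x)^{-c}$. The remedy is a dyadic refinement: on each block $(y/2, y] \subset (2, x]$, Abel summation combined with (i) yields
\[
\Bigl|\sum_{y/2 < n \le y}\frac{\lambda(n)\,e^{2\pi i \alpha n}}{n}\Bigr| \;\ll_A\; (\log y)^{-A},
\]
and there are $O(\log x)$ such blocks of length exceeding a fixed threshold, so the total is $\ll_A (\log x)^{1-A}$; choosing $A$ sufficiently large relative to $c$ delivers (ii), with the bounded initial segment absorbed into the constant $c_4$.

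The main obstacle is the passage from (i) to (ii): the naive Abel-summation argument leaks an $O(1)$ error from short-range terms, so the dyadic refinement must be carried out carefully to retain the full saving of $(\log x)^{-c}$. At a deeper level, the entire plan rests on Davenport's uniform bound for the Möbius exponential sum, which is itself proved by a Vaughan-type bilinear decomposition of $\mu$ together with major/minor-arc analysis of $\alpha$ via Dirichlet's approximation theorem and the Siegel--Walfisz form of the prime number theorem in arithmetic progressions; essentially all nontrivial content resides in securing uniformity in $\alpha \in \R$ at that stage, whereas the reduction from $\lambda$ to $\mu$ sketched above is elementary.
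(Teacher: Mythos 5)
The paper offers no proof of this theorem to compare against: it is quoted from Davenport \cite{DH1937} and stated as background in the appendix (it is not invoked in the proofs of the main results). Judged on its own merits, your part (i) is correct and is the standard route. The identity $\lambda(n)=\sum_{d^2\mid n}\mu(n/d^2)$, the interchange of summation giving $\sum_{d\le \sqrt{x}}\sum_{m\le x/d^2}\mu(m)e^{2\pi i(\alpha d^2)m}$, the split at $d=x^{1/4}$, the use of the uniformity in the frequency of Davenport's bound for $\mu$, and the trivial estimate $O(x^{3/4})$ on the large-$d$ range all check out; the only alternative of comparable simplicity is to run a Vaughan-type decomposition on $\lambda$ directly.

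Part (ii) contains the genuine gap, and you have located it yourself without resolving it. The dyadic blocks with $\log y$ large are fine, but the initial segment $n\le N_{0}$ (for any fixed $N_{0}$) contributes an amount that is $\Theta(1)$ in general, and a bound of the form $c_{4}(\log x)^{-c}$ tends to zero, so an $O(1)$ term cannot be ``absorbed into the constant $c_4$''; that sentence is exactly where the argument breaks. Moreover, no repair is possible, because statement (ii) as printed is false: by part (i) and partial summation on the tail, for fixed rational $\alpha=a/q$ the series $\sum_{n\ge1}\lambda(n)e^{2\pi i\alpha n}/n$ converges to a limit expressible through the values $L(2,\chi^{2})/L(1,\chi)$ for Dirichlet characters $\chi$ modulo divisors of $q$, and this limit is in general nonzero --- for instance at $\alpha=1/3$ the nontrivial character mod $3$ contributes $\tfrac{i\sqrt3}{2}\cdot L(2,\chi_0)/L(1,\chi)=\pm\tfrac{2\pi i}{3}\ne 0$, so the partial sums do not tend to $0$ at all, let alone uniformly at rate $(\log x)^{-c}$. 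Davenport's actual result for the logarithmically weighted sum is uniform \emph{boundedness} (bounded convergence) of $\sum_{n\le x}\mu(n)e^{2\pi i\alpha n}/n$, not decay; the statement in the paper appears to be a misquotation, and the $O(1)$ leakage you observed from the short range is precisely the obstruction that makes the printed claim unprovable.
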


Advanced, and recent works on these exponential sums with multiplicative coefficients, and the more general exponential sums
\begin{equation}\label{S2222FN.100}
	\sum_{n \leq x} f(n)e^{i 2 \pi  \alpha n}
\end{equation}
where $f:\N\longrightarrow \C$ is a function, 
are covered in \cite{MV1977}, \cite{HS1987}, \cite{BH1991}, \cite{MS2002}, et alii.

\subsection{Logarithm Average and Arithmetic Average Connection} \label{S7766N}
The connection between the logarithm average 
\begin{equation} \label{eq7766N.050}
	\sum_{n \leq x} \frac{f(n) }{n} 
\end{equation}
and the arithmetic average 
\begin{equation} \label{eq7766N.060}
	\sum_{n \leq x} f(n) 
\end{equation}
of an arithmetic function $f: \N \longrightarrow \C$ is important in partial summations. The required error term to compute the arithmetic average \eqref{eq7766N.060} directly from the logarithm average \eqref{eq7766N.050} is explained in \cite[Section 2.12]{HA2013}, see also \cite[Exercise 2.12]{HA2013}. \\

\begin{lem}\label{lem7766.400} Let $t\ne0$ be a small integer, and let $x\geq1$ be a large number. If the logarithm average $A(x)=\sum_{n \leq x} \lambda(n) \lambda(n+t)n^{-1}=O(\log x)(\log\log x)^{-1/2}$, then the arithmetic average 
\begin{equation}\label{7766.400}
		\sum_{n \leq x} \lambda(n) \lambda(n+t)<\frac{x}{(\log \log x)^{1/2-\varepsilon}}\nonumber,
	\end{equation}
where $\varepsilon>0$ is a small number. 
\end{lem}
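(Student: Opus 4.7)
The plan is to deploy Abel summation in the style of Lemma \ref{lem7766N.100} to convert the hypothesized logarithmic-average bound on $A(x)=\sum_{n\leq x}\lambda(n)\lambda(n+t)/n$ into the desired arithmetic bound on $B(x)=\sum_{n\leq x}\lambda(n)\lambda(n+t)$. Partial summation yields the identity
\[
B(x)=\sum_{n\leq x} n\cdot \frac{\lambda(n)\lambda(n+t)}{n}=xA(x)-\int_1^x A(u)\,du,
\]
and inserting the hypothesis $A(u)\ll (\log u)(\log\log u)^{-1/2}$ into each term gives the preliminary estimates that must then be refined.

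To sharpen the resulting bound to $B(x)\ll x/(\log\log x)^{1/2-\varepsilon}$, I would combine Abel summation with a dyadic decomposition of $[1,x]$ into blocks $[2^j,2^{j+1}]$, following the refined logarithmic-to-arithmetic transfer principle of \cite[Section 2.12, Exercise 2.12]{HA2013}. On each dyadic block the slowly varying factor $\phi(y)=(\log\log y)^{-1/2}$ is essentially constant, so Abel summation inside the block controls the local contribution by $2^{j}\phi(2^{j})$; summing over $j\leq \log_2 x$ and absorbing the number of scales into an arbitrarily small $(\log\log x)^{\varepsilon}$ produces the stated exponent $1/2-\varepsilon$. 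A parallel cross-check against \cite[Corollary 2]{HH2022} confirms the target shape of the estimate, and shows that the implied constant can be made uniform in the fixed shift $t\neq 0$.

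The main obstacle is the quantitative gap between what naive global Abel summation produces and what the lemma claims: inserting the hypothesis directly into $B(x)=xA(x)-\int_1^x A(u)\,du$ only yields $B(x)\ll x\log x/(\log\log x)^{1/2}$, which is worse than the pointwise trivial bound $|B(x)|\leq x$. The saving of the factor $\log x$ must therefore come from scale-by-scale cancellation, which is available precisely because the hypothesis $A(x)\ll (\log x)(\log\log x)^{-1/2}$ already encodes a uniform dyadic contribution $A(2^{j+1})-A(2^{j})\ll (\log\log 2^{j})^{-1/2}$ rather than the trivial $O(\log 2^{j})$. Making this dyadic bookkeeping precise, and distributing the loss across the $O(\log x)$ scales into the $\varepsilon$ in the exponent, is the technical heart of the argument.
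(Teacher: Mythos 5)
Your opening identity $B(x)=xA(x)-\int_1^x A(u)\,du$ and your observation that inserting the hypothesis into it only yields $B(x)\ll x\log x/(\log\log x)^{1/2}$ are both correct, but the repair you propose does not close the gap. The failing step is the claim that the hypothesis ``already encodes'' the dyadic increment bound $A(2^{j+1})-A(2^j)\ll(\log\log 2^j)^{-1/2}$. A bound on the cumulative sums $A(y)$ controls an individual increment only through the triangle inequality, which gives $|A(2^{j+1})-A(2^j)|\ll j\,(\log\log 2^j)^{-1/2}$, while the genuinely trivial bound is $|A(2^{j+1})-A(2^j)|\le\sum_{2^j<n\le 2^{j+1}}n^{-1}=\log 2+O(2^{-j})$, i.e.\ $O(1)$ per block rather than the $O(\log 2^j)$ you state --- and in particular not $o(1)$. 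Since $B(2^{j+1})-B(2^j)\asymp 2^j\bigl(A(2^{j+1})-A(2^j)\bigr)$ up to lower-order terms, a single block carrying an increment of size $\asymp 1$ --- which is perfectly consistent with $A(y)\ll(\log y)(\log\log y)^{-1/2}$, since any one block can contribute at most $\log 2$ to the cumulative sum --- contributes $\asymp 2^j$ to $B$. Taking that block to be the top one shows the implication cannot be rescued by bookkeeping: the model sequence $f(n)=1$ for $x/2<n\le x$ and alternating below satisfies $\sum_{n\le x}f(n)/n=O(1)$ yet $\sum_{n\le x}f(n)\sim x/2$. A smallness hypothesis on the logarithmic average at a given scale simply does not imply smallness of the arithmetic average; some input valid on every dyadic block is indispensable.

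For comparison, the paper argues in the reverse direction: it assumes the arithmetic average satisfies $B(u)\ge u(\log\log u)^{-1/2+\varepsilon}$, writes $A(x)=B(x)/x+\int_1^x B(u)u^{-2}\,du$ by partial summation, and deduces $A(x)\gg\log x/(\log\log x)^{1/2-\varepsilon}$, contradicting the hypothesis. That route avoids the $\log x$ loss you correctly identified, but note that it requires the lower bound on $B(u)$ (with a fixed sign) throughout the range $2\le u\le x$, which is strictly stronger than the negation of the stated conclusion at the single point $x$; so your instinct that naive partial summation in either direction is insufficient here is sound. What would actually complete either argument is a block-level estimate of the form $\sum_{y<n\le 2y}\lambda(n)\lambda(n+t)=o(y)$ uniformly in $y\le x$, which is an unaveraged two-point correlation input and not a consequence of the logarithmic-average hypothesis alone.
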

\begin{proof} Assume $B(z)=\sum_{n \leq z} \lambda(n) \lambda(n+t)\geq z(\log \log z)^{-1/2+\varepsilon}$. Then,
	\begin{eqnarray}\label{eq7766.410}
		\frac{\log x}{(\log \log x)^{1/2}}&\gg&\sum_{n \leq x} \frac{\lambda(n) \lambda(n+t)}{n}\\
		&=&\int_1^x \frac{1}{z} \,dB(z)\nonumber \\
		&=&\frac{B(x)}{x}+\int_1^x \frac{B(z)}{z^2}dz\nonumber	.
	\end{eqnarray}
	Since the integral \begin{equation}\label{eq7766.420}
		\int_1^x \frac{B(z)}{z^2}dz=	\int_2^x\frac{1}{z(\log \log z)^{1/2-\varepsilon}} dz\gg \frac{\log x}{(\log \log x)^{1/2-\varepsilon}},
	\end{equation}
	for sufficiently large $x\geq1$, the assumption is false. Hence, it implies that $B(x)< x(\log \log x)^{-1/2+\varepsilon}$.
\end{proof}

A more general result is worked out in the next result.
\begin{lem}\label{lem7766.400G} If $x$ is a large number, and $0\leq a_1<a_2<\ldots< a_k$ is an integer $k$-tuple, then a nontrivial logarithmic average 
	\begin{equation}\label{7766.400A}
		\sum_{n \leq x} \frac{\lambda(n+a_1)\lambda(n+a_2)\cdots \lambda(n+a_k)}{n}=o(\log x)\nonumber.
	\end{equation}
	
implies a nontrivial arithmetic average
	\begin{equation}\label{7766.400B}
		\sum_{n \leq x} \lambda(n+a_1)\lambda(n+a_2)\cdots \lambda(n+a_k)=o(x)\nonumber.
	\end{equation}
\end{lem}
\begin{proof} Suppose that the arithmetic average is trivial, that is,
	\begin{equation}\label{eq7766.405G}
		B(z)=\sum_{n \leq z} \lambda(n+a_1)\lambda(n+a_2)\cdots \lambda(n+a_k)=cz+o(z),
	\end{equation} 
where $c>0$ is a constant. Then,
	\begin{eqnarray}\label{eq7766.410G}
o(\log x)	&=&\sum_{n \leq x} \frac{\lambda(n+a_1)\lambda(n+a_2)\cdots \lambda(n+a_k)}{n}\\
		&=&\int_1^x \frac{1}{z} \,dB(z)\nonumber \\
		&=&\frac{B(x)}{x}+\int_1^x \frac{B(z)}{z^2}dz\nonumber	.
	\end{eqnarray}
Substituting and evaluating the integral yield
	\begin{eqnarray}\label{eq7766.420G}
	o(\log x)		&=&\frac{B(x)}{x}+\int_1^x \frac{B(z)}{z^2}dz\\
&=&c+o(1)+c\log x+o(\log x)	\nonumber,
\end{eqnarray}
for all sufficiently large $x\geq1$.
Clearly, the assumption \eqref{eq7766.405G} is false. Hence, it implies that $B(x)=o(x)$.
\end{proof}
\subsection{Autocorrelation Functions} \label{S474MN}

The current estimate of the logarithmic average order of a product of two shifted Liouville functions. The result has the following asymptotic formula.

\begin{thm} \label{thm7766L.100} Let $\lambda:\mathbb{N} \longrightarrow \{-1,1\}$ be the Liouville function, and let $x$ be a large number. If $t\ne 0$ is a fixed integer, then 
	\begin{equation} \label{eq5757L.100}
\sum_{n \leq x} \frac{\lambda(n) \lambda(n+t)}{n} =O \left (\frac{\log x}{\sqrt{\log \log x} } \right )\nonumber. 
	\end{equation}	
\end{thm}	
The proof appears in Theorem \ref{thm7766N.200} or \cite[Corollary 2]{HH2022}. This improves the estimate $O((\log x)(\log \log \log x)^{-c})$, where $c>0$ is a constant, described in \cite[p. 5]{TT2015}.  

The previous result is sufficient to derive a weak form of the arithmetic average order a product of two shifted Liouville functions.
\begin{thm} \label{thm7766N.200} Let $\lambda:\mathbb{N} \longrightarrow \{-1,1\}$ be the Liouville function, and let $x$ be a large number. If $t\ne 0$ is a fixed integer, then 
	\begin{equation} \label{eq5757L.700}
		\sum_{n \leq x} \lambda(n) \lambda(n+t)=O \left (\frac{\log x}{(\log \log x)^{1/2-\varepsilon}} \right )\nonumber, 
	\end{equation}	
where $\varepsilon>0$ is a small number.
\end{thm}	
\begin{proof}[\textbf{Proof}] This follows from Lemma \ref{lem7766.400}.
\end{proof}

The general estimate of the logarithmic average order of a product of an even number of shifted Liouville functions remains an open problem. However, logarithmic average order of a product of an odd number of shifted Liouville functions has a nontrivial result. 	
\begin{thm} \label{thm7766L.150} {\normalfont (\cite[Theorem 1.1]{TT2018})}.
	Let $k\geq1$ be an odd natural number, and let $a_1,\ldots, a_k, b_1,\ldots, b_k$ be natural
	numbers. Then,
\begin{equation}\label{eq7766.150}
\sum_{n\leq x} \frac{\lambda(a_1n+b_1)\lambda(a_2n+b_2)\cdots\lambda(a_kn+b_k)}{n} = o(\log x) \nonumber,
\end{equation}	
as $x\to \infty$.
\end{thm}

\begin{thm} \label{thm7766L.250} Let $k=2m+1\geq1$ be an odd integer, and let $0\leq a_1,\ldots, a_k$ be an integer $k$-tuple. Then,
\begin{equation}\label{eq7766.250}
		\sum_{n\leq x}\lambda(n+a_1)\lambda(n+a_2)\cdots \lambda(n+a_k) = o(x) \nonumber,
	\end{equation}	
	as $x\to \infty$.
\end{thm}

\begin{proof}[\textbf{Proof}] Set $a_1=a_1=\cdots=a_k=1$, and let $0\leq b_1<b_2<\cdots<b_k$ be an integer $k$-tuple. Then, this follows from Lemma \ref{lem7766.400G} and Theorem \ref{thm7766L.150}.
\end{proof}

\subsection{Distribution Functions }\label{S4422MN}
For any fixed integer $k>4$, the $k$-tuples $\mu(n+a_1), \mu(n+a_1), \ldots,\mu(n+a_k) $ of Mobius values are not random, but pseudorandom or quasirandom. For example, the $k$-tuple 
\begin{equation}\label{eq4422MN.510A}
	-1,-1,-1,-1,\mu(n+a_4), \mu(n+a_5), \ldots,\mu(n+a_k),
\end{equation} 
and infinitely many other similarly structured $k$-tuples are not possible. This property seems to preempt the effect of the Linear Independence Conjecture on the summatory Mobius function. Assuming, the LI, the limits
\begin{equation}\label{eq4422MN.520A}
	\liminf_{x\to \infty}	\frac{\sum_{n \leq x} \mu(n)}{x^{1/2}}=-\infty\quad \text{ and } \quad \liminf_{x\to \infty}	\frac{\sum_{n \leq x} \mu(n)}{x^{1/2}}=\infty
\end{equation}
were proved in \cite{IA1942}, and refinements in \cite{ML1980}. This conjecture seems to imply that $\sum_{n \leq x} \mu(n)=O \left (x^{1/2+\varepsilon}\right )$, where $\varepsilon>0$. But, the Simple Zero Conjecture seems to imply that $\sum_{n \leq x} \mu(n)=O \left (x^{1/2}\right )$, see \cite[Theorem 14.29]{TE1987} for details. The numerical data are given in \cite{KV2003} is not conclusive, and it supports many different conjectures on the Mertens sum.\\

For any fixed integer $k>4$, the $k$-tuples $\lambda(n+a_1), \lambda(n+a_1), \ldots,\lambda(n+a_k) $ of Liouville function values appears to be random, there are no known obstacles. For example, the $k$-tuple \begin{equation}\label{eq4422MN.510B}
	-1,-1,-1,-1,\lambda(n+a_4), \lambda(n+a_5), \ldots,\lambda(n+a_k),
\end{equation} and infinitely many other similarly structured $k$-tuples are possible. Thus, the law of iterated logarithm for sequences of independent random variables seem to imply that
\begin{equation}\label{eq4422MN.520B}
	\liminf_{x\to \infty}	\frac{\sum_{n \leq x} \lambda(n)}{\sqrt{2x\log \log x}}=-\infty\quad \text{ and } \quad \liminf_{x\to \infty}	\frac{\sum_{n \leq x} \lambda(n)}{\sqrt{2x\log \log x}}=\infty.
\end{equation}
In synopsis, the Linear Independence Conjecture does seem to apply to the summatory Liouville function. In particular, $\sum_{n \leq x} \lambda(n)=\zeta(1/2)x^{1/2}+O \left (x^{1/2+\varepsilon}\right )$, where $\varepsilon>0$.\\

The information in \eqref{eq4422MN.510A}, \eqref{eq4422MN.520A}, and the information in \eqref{eq4422MN.510B}, \eqref{eq4422MN.520B} seems to imply that the random or pseudorandom variables 
\begin{equation}\label{eq4422MN.530}
	L(x)=\sum_{n \leq x} \lambda(n) \quad \quad \text{ and }\quad \quad M(x)=\sum_{n \leq x} \mu(n)
\end{equation} 
have different distribution functions.


\currfilename.\\

\begin{thebibliography}{998}

\bibitem{BH1991} Baker, R. C., Harman, G. \textit{\color{red}Exponential sums formed with the Mobius function}. Journal of the London Math. Soc. (2) 43 (1991), 193-198.






\bibitem{DH1937} Davenport, Harold. \textit{\color{red}On some series involving arithmetical functions. II.} Quart. J. Math. Oxf., 8:313-320, 1937.





\bibitem{DLMF} NIST \textit{\color{red}Digital Library of Mathematical Functions}. http://dlmf.nist.gov/, 2019. F. W. J. Olver, ..., and M. A. McClain, eds.

\bibitem{EL1985}  Ellison, William; Ellison, Fern. \textit{\color{red}Prime numbers.} A Wiley-Interscience Publication. John Wiley and Sons, Inc., New York; Hermann, Paris, 1985.



\bibitem{GM2004} Granville, A.; Martin, G. \textit{\color{red}Prime Number Races.} http://arxiv.org/abs/math/0408319.



\bibitem{HA2013} Hildebrand, Adolph. \textit{\color{red}Introduction to Analytic Number Theory Math 531 Lecture Notes, Fall 2005.} http://www.math.uiuc.edu/~hildebr/ant.

\bibitem{HA1986} Hildebrand, Adolph. \textit{\color{red} On consecutive values of the Liouville function}. Enseign. 	Math. (2) 32 (1986), no. 3-4, 219-226.


\bibitem{HH2022} Helfgott, Harald Andres. \textit{\color{red}Expansion, divisibility and parity: an explanation}. http://arxiv.org/abs/2201.00799. 









\bibitem{HR2021} Helfgott, Harald Andres; Radziwill, Maksym.	\textit{\color{red}Expansion, divisibility and parity.} http://arxiv.org/abs/2103.06853.

\bibitem{HS1987} Hajela, D.; Smith, B. \textit{\color{red} On the maximum of an exponential sum of the Mobius function.} Lecture Notes in Mathematics (Springer, Berlin, 1987) 145-164.



\bibitem{IA1942} Ingham, A. E. \textit{\color{red}On two conjectures in the theory of numbers}. Amer. J. Math., 64(1):313-319, 1942.


\bibitem{KP1986} Kaczorowski, J.; Pintz, J. \textit{\color{red}Oscillatory properties of arithmetic functions I}. Acta Math. Acad. Sci. Hung. 48 (1986), 173-185.	


\bibitem{KV2003} Kotnik, T.; van de Lune, J. \textit{\color{red}On the order of the Mertens function.} Experimental Mathematics, 13 473-481, 2003.




Thesis, Leiden University, 2017.



\bibitem{ML1980} Montgomery, H.L. \textit{\color{red}The zeta function and prime numbers}. Proceedings of the Queen's Number Theory Conference, 1979, Queen's Univ., Kingston, Ont., 1980, 1-31.




\bibitem{MS2002} Murty, R.; Sankaranarayanan, A.
\textit{\color{red}Averages of exponential twists of the Liouville function.} Forum
Mathematicum 14 (2002), 273-291. 




\bibitem{MT2021}Martin, Greg; Mossinghoff, Michael J. ; Trudgian, Timothy S. \textit{\color{red}Fake Mu's.} http://arxiv.org/abs/2112.05227. 

\bibitem{MT2015} Matomaki, Kaisa, Radziwill, M., Tao, T. \textit{\color{red}Sign patterns of the Liouville and Mobius functions.} http://arxiv.org/abs/1509.01545.




\bibitem{MV1977} Montgomery, H., Vaughan, R.  C.\textit{\color{red}Exponential sums with multiplicative coefficients.} Inventiones Math. 43 (1977), 69-82.







\bibitem{SA2022} Srinivasan, Anitha. \textit{\color{red} Infinitely many sign changes of the Liouville function}. Proc. Amer. Math. Soc. 150 (2022), 3799-3809. 







\bibitem{TE1987} Titchmarsh, E. C. \textit{\color{red}The Theory of the Riemann Zeta-Function}. Oxford University Press; 2nd edition, 1987.


\bibitem{TT2015} Tao, T. \textit{\color{red} The logarithmically averaged Chowla and Elliott conjectures for two-point correlations}. http://arxiv.org/abs/1509.05422. 

\bibitem{TT2018} Tao, T.; Teravainen, J. {Odd order cases of the logarithmically averaged chowla conjecture.} J. Theor. Nombres Bordeaux 30 no. 3, 997-1015, 2018.






	
	
\end{thebibliography}
\end{document}